\numberwithin{equation}{section}
\newtheorem{theorem}{Theorem}[section]
\newtheorem{lemma}[theorem]{Lemma}
\newtheorem{definition}[theorem]{Definition}
\DeclareMathOperator{\divergence}{div}
\DeclareMathOperator{\gradient}{grad}
\DeclareMathOperator{\curl}{curl}
\DeclareMathOperator{\Ker}{Ker}
\newcommand{\norm}[1]{\left\lVert#1\right\rVert}
\begin{document}
\title{Fitted norm preconditioners for the Hodge Laplacian in mixed form}

\author{\| Wietse M.  |Boon|, Bergen,
        \| Johannes |Kraus|, Essen,
        \| Tomáš |Luber|, Ostrava,
        \| Maria |Lymbery|, Essen} 

\rec{July 31, 2025}

\abstract 
    We use the practical framework for abstract perturbed saddle point problems recently introduced by Hong et al.~to analyze the mixed formulation of the Hodge Laplace problem. We compose two parameter-dependent norms in which the uniform continuity and stability of the problem follow. This not only guarantees the well-posedness of the corresponding variational formulation on the continuous level, but also of related compatible discrete models.
    
    We further simplify the obtained norms and, in both cases, arrive at the same norm-equivalent preconditioner that is easily implementable. The efficiency and uniformity of the preconditioner are demonstrated numerically by the fast convergence and uniformly bounded number of preconditioned MINRES iterations required to solve various instances of Hodge Laplace problems in two and three space dimensions.
\endabstract

\keywords
  Preconditioning, Hodge Laplacian, perturbed saddle point, Hilbert complex
\endkeywords

\subjclass
65N22, 
65F08, 
35J05, 
58J10, 
65N30,  
58A14 
\endsubjclass


\section{Introduction}
\label{sec:introduction}

Effective preconditioners are essential for solving discretized systems of partial differential equations. 
It is of particular importance that the preconditioner is robust with respect to physical and discretization parameters, so that the convergence of the iterative solver can be guaranteed in a wide range of applications. 

In multi-physics problems, the systems of equations typically have a block structure where the blocks model the different physical systems and the coupling relations. With this perspective, coupled systems can often be identified as (perturbed) saddle point problems \cite{boffi2013mixed}. Examples include flow in porous media in which the fluid flux is coupled to the fluid pressure, but also Biot poroelasticity in which the solid mechanics is coupled to the fluid flow \cite{Cheng2016}, and the Maxwell equations \cite{Monk2003-vd}. 

A recent paper \cite{hong2023new} establishes a framework that exploits this block structure to provide a robust block diagonal preconditioners for perturbed saddle point systems. A main advantage of this approach is that the norms are provided by the theory. In contrast, the preconditioners proposed in \cite{boon2021robust} require the practitioner to derive appropriate norms and verify an inf-sup condition from \cite{Braess1996stability}. 
Since the framework~\cite{hong2023new} provides these norms, it can directly be used to construct norm-equivalent preconditioners~\cite{mardal2011preconditioning}. 
It is thereby applicable on the continuous level, as demonstrated in~\cite{hong2023new} for various mixed variational formulations of generalized Poisson, Stokes, vector Laplace and Biot's equations but also to discrete models, as shown, e.g., for a conservative discretization of the Biot-Brinkman problem in~\cite{HoKrKuLyMaRo2021}, or a hybridized hybrid-mixed-method for the multiple network poroelasticity problem in~\cite{KrLeLyOsSc2022}. 
The objective of this paper is to explore the application of this framework to the case of the Hodge Laplacian in mixed form.

The (weighted) abstract Hodge Laplace problem is posed as: Given $f$, find $u$ such that 
\begin{equation}
    (\alpha^{-1} d_{k-1} d^*_{k-1} + d^*_{k} d_{k}) u = f,
\end{equation}
where $d_{k - 1}$ and $d_k$ are differential operators that satisfy $d_k d_{k - 1} = 0$, and $d_{k - 1}^*$ and $d_k^*$ denote their respective adjoints. 
A prominent example is the Laplace problem in which $d_{k - 1} = \divergence$ and $d_k = 0$. This problem is ubiquitous in physics simulations \cite{evans2022partial}, with applications ranging from Darcy flow to Fourier heat conduction, in which $\alpha$ denotes the permeability, respectively heat conductivity, of the medium. On the other hand, the vector Laplacian with $d_{k} = \divergence$ and $d_{k - 1} = \curl$ plays a role in Navier-Stokes equations and the Maxwell equations. More recently, the Hodge Laplace structure has been recognized in the modeling of Cosserat elasticity \cite{vcap2024bgg} and in the equations governing flow in fractured porous media \cite{boon2021functional}.

In this work, we consider the mixed formulation of the Hodge Laplacian problem, that treats $d^*_k$ weakly, in contrast with the usual primal approach \cite{Savostianov2024}. 
We then use the norm fitting framework to show that the problem is well posed and construct robust and efficient block diagonal preconditioners for Krylov subspace solvers.

The novelty of the presented approach lies in the unified analysis of the Hodge Laplacian on a Hilbert complex.
Our analysis will follow the line of operator preconditioning \cite{mardal2011preconditioning}.
We will analyze the system and derive all results on the functional level and use a structure preserving simplicial discretization coming from Finite Element Exterior Calculus \cite{arnold2006finite} to transfer them to the discrete level, leading to well posed linear systems and robust matrix preconditioners.
Discretization on cubical meshes can be found in \cite{Lee2022} and on polytopal meshes in \cite{bonaldi2025exterior,di2023arbitrary}

The starting point of the analysis is the abstract exact Hilbert complex \cite{arnold2018finite, Hiptmair2023}.
We will make use of its properties, namely the Poincaré inequality and Hodge decomposition, and the key ideas from \cite{hong2023new} to propose a preconditioner scheme that can be applied for every system that fits the Hilbert complex setup. 
The results presented here are therefore applicable to any complex that satisfies these properties, like de Rham complex on Riemannian manifolds \cite{Bruning1992} or complexes arising in elasticity \cite{Arnold, Angoshtari2016}.

The remainder of this article is organized as follows. \Cref{sec:hilbert_complexes} introduces the Hodge Laplace problem in the context of Hilbert complexes. The analysis framework of \cite{hong2023new} for perturbed saddle point systems is summarized in \Cref{sec:perturbed_saddle_point_problems}. We use this framework to analyze the Hodge Laplace problem in \Cref{sec:analysis_HL}. Finally, numerical experiments are presented in \Cref{sec:numerical_results}, showcasing the robustness of the proposed preconditioners and conclusions are presented in \Cref{sec:conclusion}.

\section{Hilbert complexes and the Hodge Laplace problem}
\label{sec:hilbert_complexes}

The Hodge Laplacian offers a generalization of the usual scalar Laplacian to vector-valued functions on manifolds.
We will use the language of Hilbert complexes, see \cite{arnold2018finite}, which provides a unifying framework for treatment and generalization of the common differential operators $\gradient, \divergence, \curl$ and their compositions, like the Laplacian $\Delta$. Moreover, the theory provides the machinery to construct stable, conforming discretizations through Finite Element Exterior Calculus \cite{arnold2006finite}.

Let $\Omega \subset \mathbb{R}^n$ be a bounded Lipschitz domain. 
For $n = 3$, the relevant differential operators and Hilbert spaces can be written in the following diagram, known as the de Rham complex
\begin{equation} \label{eq:example_sequence}
\begin{tikzcd}
    0 \rar  
    & H^1(\Omega) \arrow[r, "\gradient"] 
    & \mathbf{H}(\curl, \Omega) \arrow[r, "\curl"] 
    & \mathbf{H}(\divergence, \Omega) \arrow[r, "\divergence"] 
    & L^2(\Omega) \rar
    & 0.
\end{tikzcd}
\end{equation}

Here, every arrow is a bounded linear map with the arrows going from and to $0$ representing trivial maps. The calculus identities $\curl\gradient = 0, \divergence\curl = 0$ imply that the image of each differential operator lies in the kernel of the next differential operator. Let $H \Lambda^\bullet$ and $d_\bullet$ serve as compact notation for the spaces and differential operators, then \eqref{eq:example_sequence} can be structurally represented as
\begin{equation} \label{eq:deRham_as_hilbert_complex}
    \begin{tikzcd}
        0 \rar
        & H\Lambda^0 \arrow[r, "d_0"]
        & H\Lambda^1 \arrow[r, "d_1"]
        & H\Lambda^2 \arrow[r, "d_2"]
        & H\Lambda^3 \rar
        & 0.
    \end{tikzcd}
\end{equation}

We refer to $(H \Lambda^\bullet, d_\bullet)$ as a \emph{Hilbert complex} if each $H \Lambda^k$ is a Hilbert space and the differentials $d_k : H \Lambda^k \to H \Lambda^{k + 1}$ satisfy $d_{k + 1} d_k = 0$. We omit the subscript on $d$ when clear from context. We call a Hilbert complex \emph{exact} if for each $q \in H \Lambda^k$ with $d q = 0$ there exists a $v \in H \Lambda^{k - 1}$ such that $dv = q$. 
In this work, we only consider exact complexes. In the context of the de Rham complex \eqref{eq:example_sequence}, it suffices to assume that $\Omega$ is contractible.


With these definitions in place, we can pose the mixed formulation of the Hodge Laplace problem. 
Given the functionals $g \in (H\Lambda^{k - 1})'$ and $f \in (H\Lambda^k)'$, find $(p, u) \in H \Lambda^{k -1} \times H \Lambda^k$ such that
\begin{subequations} \label{eqs: Hodge Laplace problem original}
	\begin{align}
	(\alpha p, q)_\Omega - (u, dq)_\Omega &= \langle g, q \rangle, &
	\forall q &\in H \Lambda^{k - 1}, \\
	(dp, v)_\Omega + (du, dv)_\Omega &= \langle f, v \rangle, &
	\forall v &\in H \Lambda^k.
\end{align}
\end{subequations}
Here, $(\cdot, \cdot)_\Omega$ is the $L^2$ inner product on $\Omega$ and $\langle \cdot, \cdot \rangle$ denotes a duality pairing. $\alpha$ is a positive constant that often represents a material parameter. 

Let the norm on $H \Lambda^k$ be given by $\norm{v}^2_k = \norm{v}_\Omega^2 + \norm{d_k v}_\Omega^2$. Let
$d^*_k$ denote the adjoint to $d_k$, i.e. it satisfies the relation
\begin{align} \label{eq:dstar_def}
    (d_k^* z, v)_\Omega &= 
    (z, d_k v)_\Omega, &
    \forall (z, v) \in H\Lambda^{k + 1} \times H\Lambda^k
\end{align}
Note that, in general, it is not guaranteed that $d_k^* z \in H \Lambda^k$ for all $z$. However, it is guaranteed if $H \Lambda^\bullet$ is a discrete subcomplex of \eqref{eq:example_sequence}, which is the case considered in this work.

To analyze the mixed formulation, we will use the following standard results for an exact complex, see \cite{arnold2018finite, arnold2006finite}. 
\begin{lemma}[Poincaré inequality] \label{lemma:Poincare} 
    There exist constants $c_k^P$ such that
    \begin{equation}
        \| u \|_\Omega^2 \leq c_k^P \| d u \|_\Omega^2 \quad \forall u \in H \Lambda^{k} \text{ s.t. } d_{k-1}^* u = 0.
    \end{equation}
\end{lemma}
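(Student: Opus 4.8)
The plan is to read the hypothesis $d_{k-1}^* u = 0$ as an orthogonality condition and then extract the estimate from the bounded inverse theorem applied to $d_k$ on an appropriate subspace. First I would unwind the definition of the adjoint in \eqref{eq:dstar_def}: the condition $d_{k-1}^* u = 0$ is equivalent to $(u, d_{k-1} q)_\Omega = 0$ for all $q \in H\Lambda^{k-1}$, that is, to $u$ being $L^2$-orthogonal to $\Image(d_{k-1})$. Exactness of the complex gives $\Image(d_{k-1}) = \Ker(d_k)$, so the admissible forms $u$ are exactly those in the $L^2$-orthogonal complement of $\Ker(d_k)$ inside $H\Lambda^k$; call this subspace $V$. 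On $V$ the operator $d_k$ is injective, since $u \in V$ with $d_k u = 0$ would lie in $\Ker(d_k)$ while being orthogonal to it, forcing $u = 0$.

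The crux of the argument is to show that $d_k$ has closed range, and here exactness pays off directly. At the next stage of the complex one has $\Image(d_k) = \Ker(d_{k+1})$, which is closed as the preimage of $\{0\}$ under the bounded operator $d_{k+1}$; at the terminal end of the complex one instead uses that $\Image(d_k)$ is the whole final space, which is trivially closed. In either case $\Image(d_k)$ is a closed, hence complete, subspace of $H\Lambda^{k+1}$.

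To assemble the estimate, I would equip $H\Lambda^k$ with the graph norm $\norm{\cdot}_k$. The subspace $V$ is closed in this norm, being the intersection of $H\Lambda^k$ with a closed subspace of $L^2$ together with the continuity of the inclusion $H\Lambda^k \hookrightarrow L^2$, and is therefore complete. The restriction $d_k|_V \colon V \to \Image(d_k)$ is then a continuous bijection between Hilbert spaces, so the bounded inverse theorem furnishes a constant $C$ with $\norm{u}_k \le C \norm{d_k u}_\Omega$ for every $u \in V$. Since $\norm{u}_\Omega \le \norm{u}_k$, squaring yields the claimed inequality with $c_k^P = C^2$.

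I expect the closed-range step to be the only substantive point; the remainder is routine bookkeeping with orthogonal complements and graph norms. It is worth remarking that in the discrete setting used throughout the rest of the paper all the spaces $H\Lambda^k$ are finite-dimensional, so closedness is automatic and the argument collapses to the elementary fact that an injective linear map between finite-dimensional inner product spaces is bounded below; this is the version actually invoked in the applications that follow.
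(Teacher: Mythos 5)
Your argument is correct and is the standard proof of the Poincar\'e inequality for an exact (closed) Hilbert complex; the paper does not prove this lemma itself but quotes it as a known result from the cited FEEC literature, where exactly this orthogonal-complement-plus-bounded-inverse-theorem argument appears. The only steps you leave implicit are that $d_k|_V$ is surjective onto $\Image(d_k)$ (which holds because the $L^2$-orthogonal projection onto $\Ker(d_k)\subset H\Lambda^k$ keeps you inside $H\Lambda^k$, so any preimage can be corrected to lie in $V$) and that $\Ker(d_{k+1})$ is complete in the $L^2$ norm and not merely closed in the graph norm (immediate since the two norms coincide on the kernel); both are routine and do not affect the validity of the proof.
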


\begin{lemma}[Hodge decomposition] \label{lemma:Hodge}
    Every $v \in H\Lambda^k$ can be uniquely decomposed as
    \begin{equation}
        v = d_{k-1} w + d^*_k z
    \end{equation}
    where $w \in H \Lambda^{k-1}$ with $d_{k - 2}^*w = 0$, $z \in H\Lambda^{k+1}$ with $d_{k + 1} z = 0$, and the summands are orthogonal in the $L^2$ inner product.
\end{lemma}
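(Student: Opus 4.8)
The plan is to establish three things in turn: that the two summands are $L^2$-orthogonal, that the decomposition exists, and that the side conditions on $w$ and $z$ make it unique. The orthogonality is the only part independent of exactness, so I would dispatch it first. For any $w \in H\Lambda^{k-1}$ and $z \in H\Lambda^{k+1}$, the defining relation \eqref{eq:dstar_def} together with $d_k d_{k-1} = 0$ gives
\[
    (d_{k-1} w, d_k^* z)_\Omega = (d_k d_{k-1} w, z)_\Omega = 0,
\]
so $\Image(d_{k-1})$ and $\Image(d_k^*)$ intersect only at $0$ and any decomposition of the stated form is automatically orthogonal in the $L^2$ inner product.

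For existence I would split $v$ using the $L^2$-orthogonal projection $P$ onto the subspace $\Image(d_{k-1})$. The preliminary point is that this image is closed, and this is exactly where \Cref{lemma:Poincare} enters: the Poincaré inequality bounds every coexact field by its differential and thereby forces $d_{k-1}$ to have closed range. (In the discrete subcomplex setting considered in this work the spaces are finite-dimensional, so closedness is automatic.) Writing $v = Pv + (I - P)v$, the first term lies in $\Image(d_{k-1})$ and hence equals $d_{k-1} w$ for some $w \in H\Lambda^{k-1}$. The second term lies in the orthogonal complement $\Image(d_{k-1})^\perp = \Ker(d_{k-1}^*)$. Here I would invoke exactness of the dual complex $(H\Lambda^\bullet, d^*_\bullet)$, which follows from exactness of the primal one since $d_{k-1}^* d_k^* = (d_k d_{k-1})^* = 0$ and closed range is preserved under taking adjoints; this yields $\Ker(d_{k-1}^*) = \Image(d_k^*)$, so $(I - P)v = d_k^* z$ for some $z \in H\Lambda^{k+1}$.

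It remains to read off uniqueness from the side conditions, which I would do symmetrically. The field $w$ is determined by $d_{k-1} w = Pv$ only modulo $\Ker(d_{k-1})$, and by exactness $\Ker(d_{k-1}) = \Image(d_{k-2})$; since $d_{k-2}^* w = 0$ is precisely the statement that $w \perp \Image(d_{k-2})$, it selects the unique representative. In the same way $z$ is determined modulo $\Ker(d_k^*) = \Image(d_{k+1}^*)$, and the condition $d_{k+1} z = 0$ is exactly $z \perp \Image(d_{k+1}^*)$, so it pins $z$ down uniquely as well. Combined with the orthogonality already shown, this gives both the existence of the representation and the uniqueness of the two summands.

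The hard part will be the analytic input hidden in the second step: the closed range of $d_{k-1}$ and the attendant exactness of the dual complex. On the continuous level this is the substantive content, and it is supplied by the Poincaré inequality; everything else is bookkeeping with the elementary identity $\Image(T)^\perp = \Ker(T^*)$. In the finite-dimensional discrete subcomplex this obstacle dissolves entirely, which is why the result can be quoted directly as a standard consequence of exactness.
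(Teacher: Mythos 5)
The paper does not prove this lemma at all: it is stated as a standard result for exact Hilbert complexes, with the reader referred to \cite{arnold2018finite,arnold2006finite}. Your reconstruction is correct and is essentially the standard argument found in those references: orthogonality from $d_k d_{k-1}=0$ via the adjoint relation, existence by projecting onto the closed range of $d_{k-1}$ and using $\Image(d_{k-1})^\perp=\Ker(d_{k-1}^*)=\Image(d_k^*)$, and uniqueness by selecting the representatives orthogonal to $\Ker(d_{k-1})=\Image(d_{k-2})$ and to $\Ker(d_k^*)=\Image(d_{k+1}^*)$. Two remarks on where your route differs from the usual presentation. First, you derive closed range of $d_{k-1}$ from the Poincar\'e inequality; in the abstract Hilbert-complex framework the implication normally runs the other way (closed range, assumed or obtained from a compactness property, yields Poincar\'e via the open mapping theorem). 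Your direction is also valid, but it makes \Cref{lemma:Poincare} a genuine logical input to \Cref{lemma:Hodge} rather than a parallel consequence of the same hypothesis. Second, on the continuous level you gloss over domain issues for the unbounded adjoint: the paper itself warns that $d_k^*z$ need not lie in $H\Lambda^k$, and symmetrically the $z$ produced by your existence step a priori lies only in the domain of $d_k^*$, not in $H\Lambda^{k+1}$; one must take the representative $z\in\overline{\Image(d_k)}\subseteq\Ker(d_{k+1})$ for the side condition $d_{k+1}z=0$ to even be meaningful. Your uniqueness paragraph does this implicitly, and in the finite-dimensional discrete subcomplex actually used in the paper both issues disappear, as you correctly observe.
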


\section{The fitted norm framework for perturbed saddle point problems}
\label{sec:perturbed_saddle_point_problems}

\subsection{Perturbed saddle point problems}

Let $V$ and $Q$ be Hilbert spaces equipped with norms
$\Vert \cdot \Vert_V$ and $\Vert \cdot \Vert_Q$ that are induced by scalar products 
$(\cdot,\cdot)_V$ and $(\cdot,\cdot)_Q$, respectively. 
We introduce the product space $Y:=V\times Q$ and equip it  
with the norm 
\begin{equation}\label{y_norm}
\Vert y \Vert_Y^2 = (y,y)_Y = (v,v)_V + (q,q)_Q = \Vert v\Vert_V^2 + \Vert q\Vert_Q^2  \qquad \forall y = 
(v;q):= \begin{pmatrix}v\\ q\end{pmatrix}\in Y. 
\end{equation}

On $Y\times Y$ we consider the abstract bilinear form
\begin{equation}\label{bilinear_form}
\mathcal{a}((u;p),(v;q)):=a(u,v)+b(v,p)+b(u,q)-c(p,q)
\end{equation}
for some symmetric positive semidefinite (SPSD)
bilinear forms 
$a(\cdot,\cdot)$ on 
$V\times V$ and SPSD $c(\cdot,\cdot)$ on $Q \times Q$
and a bilinear form 
$b(\cdot,\cdot)$ on $V \times Q$.

Under the assumption that the continuity bounds
\begin{subequations}
\begin{align}
    a(u,v) & \le \bar{C}_a \Vert u\Vert_V \Vert v\Vert_V  & \forall u,v &\in V, 
    \label{a_cont}\\
    b(v,q) & \le \bar{C}_b \Vert v\Vert_V \Vert q\Vert_Q  & \forall v &\in V,\forall q\in Q, 
    \label{b_cont}\\
    c(p,q) & \le \bar{C}_c \Vert p\Vert_Q \Vert q\Vert_Q  & \forall p,q &\in Q
\label{c_cont}
\end{align} 
\end{subequations}
hold, we define the bounded linear operators: 
\begin{subequations}\label{eqs:operatorsABC}
\begin{align}
    A&: V\rightarrow V' & \langle Au,v\rangle_{V'\times V} &\coloneqq a(u,v), & \forall u,v &\in V, \\
    C&: Q\rightarrow Q' & \langle Cp,q\rangle_{Q'\times Q} &\coloneqq c(p,q), & \forall p,q &\in Q, \\
    B&: V\rightarrow Q' & \langle Bv,q\rangle_{Q'\times Q}  &\coloneqq b(v,q), & \forall (v,q) &\in V \times Q, \\ 
    B^T&: Q\rightarrow V' & \langle B^T q, v\rangle_{V'\times V} &\coloneqq b(v,q), & \forall (v,q) &\in V \times Q,
\end{align}
\end{subequations}
where, $V'$ and $Q'$ are the dual spaces of $V$ and $Q$, 
while $\langle \cdot, \cdot \rangle $ denote the corresponding duality pairings.  

The associated perturbed saddle-point problem is given by: find $(u, p) \in V \times Q$ such that
\begin{equation}\label{PSPP}
\mathcal{a}((u;p),(v;q)) = \mathcal{F}((v;q))\qquad \forall (v, q) \in Y, 
\end{equation}
where $\mathcal{F}\in Y'$, i.e., $\mathcal{F}: Y\rightarrow \mathbb{R}: \mathcal{F}(y)=\langle \mathcal{F},y \rangle_{Y'\times Y}$ 
for all $y \in Y$.
Using the definitions $x=(u;p)$ and $y=(v;q)$, \eqref{PSPP} can be rewritten as 
$$\mathcal{a}(x,y) = \mathcal{F}(y)\qquad \forall y \in Y,$$
or, in operator form 
\begin{equation}\label{PSPP_o}
\mathcal{A} x = \mathcal{F}
\end{equation}
where 
\begin{align}
\mathcal{A}&: Y\rightarrow Y' & \langle \mathcal{A}x,y\rangle_{Y'\times Y} &\coloneqq \mathcal{a}(x,y), & \forall x,y &\in Y.
\end{align}

The operator $\mathcal{A}$ has the block form  
\begin{equation}\label{A_block_form}
\mathcal{A} = \begin{pmatrix}
A & B^T \\
B & -C
\end{pmatrix}.
\end{equation}

\begin{theorem}[Babu\v{s}ka~\cite{Babuska1971error}]\label{thm:1} 
Consider the bounded linear functional $\mathcal{F}\in Y'$. 
The saddle-point problem~\eqref{PSPP} is well-posed if 
the conditions 
\begin{equation}\label{boundedness_A}
    \mathcal{a}(x, y) \le \bar{C} \Vert x\Vert_{Y} \Vert y\Vert_{Y} \qquad \forall x, y \in Y,
\end{equation}
\begin{equation}\label{inf_sup_A}
\inf_{x \in Y} \sup_{y \in Y} \frac{\langle \mathcal{A} x,y \rangle}{\Vert x\Vert_{Y} \Vert y\Vert_{Y}} \ge \underline{\alpha}>0
\end{equation}
can be fulfilled with some positive constants $\bar{C}$ and $\alpha$.
The stability estimate 
\begin{equation*}
\Vert x\Vert_{Y} \le \frac{1}{ \underline{\alpha}} \sup_{y \in Y}\frac{\mathcal{F}(y)}{\Vert y\Vert_{Y}}
=: \frac{1}{ \underline{\alpha}}\Vert \mathcal{F}\Vert_{Y'}
\end{equation*}
then holds true for the solution $x$.
\end{theorem}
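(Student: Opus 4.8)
We will argue by reading the inf-sup condition \eqref{inf_sup_A} as a coercivity-type lower bound on the operator $\mathcal{A}$ and then assembling the three ingredients of well-posedness: uniqueness, the stability estimate, and existence. The plan is first to rewrite \eqref{inf_sup_A} in operator form. By definition of the dual norm it is equivalent to
\begin{equation*}
\Vert \mathcal{A} x \Vert_{Y'} = \sup_{y \in Y} \frac{\langle \mathcal{A} x, y\rangle_{Y'\times Y}}{\Vert y \Vert_Y} \ge \underline{\alpha}\, \Vert x \Vert_Y \qquad \forall x \in Y,
\end{equation*}
so that $\mathcal{A}$ is bounded below. This single estimate already delivers two of the three required properties: it shows that $\mathcal{A}$ is injective (if $\mathcal{A} x = 0$ then $\Vert x \Vert_Y = 0$), hence the problem has at most one solution, and, applied to a solution $x$ of \eqref{PSPP_o}, it yields $\underline{\alpha}\,\Vert x \Vert_Y \le \Vert \mathcal{A} x \Vert_{Y'} = \Vert \mathcal{F} \Vert_{Y'}$, which is precisely the claimed stability estimate.

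It remains to establish existence, i.e.\ that $\mathcal{A} : Y \to Y'$ is surjective. I would split this into showing that the range $\Image(\mathcal{A})$ is closed and that it is all of $Y'$. Closedness follows from the lower bound by a standard Cauchy-sequence argument: if $\mathcal{A} x_n \to g$ in $Y'$, then $(\mathcal{A} x_n)$ is Cauchy, the bound $\Vert \mathcal{A}(x_n - x_m)\Vert_{Y'} \ge \underline{\alpha}\,\Vert x_n - x_m\Vert_Y$ forces $(x_n)$ to be Cauchy in the complete space $Y$, its limit $x$ satisfies $\mathcal{A} x = g$ by the continuity \eqref{boundedness_A}, and hence $g \in \Image(\mathcal{A})$.

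For surjectivity onto this closed subspace I would exploit the symmetry of the form. Since $a(\cdot,\cdot)$ and $c(\cdot,\cdot)$ are symmetric, the composite form \eqref{bilinear_form} satisfies $\mathcal{a}(x,y) = \mathcal{a}(y,x)$, i.e.\ $\mathcal{A}$ is self-adjoint under the Riesz identification. Suppose $\Image(\mathcal{A}) \neq Y'$. Being a proper closed subspace, it admits a nonzero annihilating functional, and by reflexivity of the Hilbert space $Y$ this functional is represented by some $y_0 \neq 0$ in $Y$ with $\langle \mathcal{A} x, y_0\rangle_{Y'\times Y} = 0$ for all $x \in Y$. Symmetry rewrites this as $\langle \mathcal{A} y_0, x\rangle_{Y'\times Y} = \mathcal{a}(y_0,x) = \mathcal{a}(x,y_0) = 0$ for all $x$, whence $\mathcal{A} y_0 = 0$; the lower bound then gives $y_0 = 0$, a contradiction. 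Therefore $\Image(\mathcal{A}) = Y'$ and a solution exists.

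The main obstacle is exactly this surjectivity step. The theorem supplies only a single inf-sup condition \eqref{inf_sup_A}, whereas the general Banach--Ne\v{c}as--Babu\v{s}ka result additionally requires a transpose nondegeneracy condition to close the existence argument. The crux is to recognize that this second condition is automatic in the present setting because the saddle-point form is symmetric, and the entire existence argument hinges on converting the annihilation identity into $\mathcal{A} y_0 = 0$ via that symmetry. One could instead bypass symmetry by stating the transpose inf-sup explicitly, but given the SPSD structure of $a$ and $c$ it is cleaner to invoke self-adjointness.
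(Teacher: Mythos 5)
Your argument is correct. The paper does not prove this theorem at all --- it is stated as a classical result with a citation to Babu\v{s}ka --- so there is no in-paper proof to compare against; your self-contained derivation is the standard one. The decomposition into (i) the operator lower bound $\Vert \mathcal{A}x\Vert_{Y'}\ge \underline{\alpha}\Vert x\Vert_Y$ equivalent to \eqref{inf_sup_A}, giving injectivity and the stability estimate, (ii) closedness of $\Image(\mathcal{A})$ via the Cauchy-sequence argument together with the continuity \eqref{boundedness_A}, and (iii) surjectivity by annihilator plus reflexivity, is exactly how this is done. Your closing remark identifies the one genuinely delicate point: as literally stated, a single inf-sup condition does not imply well-posedness for a general bilinear form (the Banach--Ne\v{c}as--Babu\v{s}ka theorem needs the transpose nondegeneracy condition as well), and the statement is only legitimate here because $\mathcal{a}$ in \eqref{bilinear_form} is symmetric --- which follows from the symmetry of $a(\cdot,\cdot)$ and $c(\cdot,\cdot)$ assumed in the surrounding setup --- so that $\langle \mathcal{A}x,y_0\rangle=0$ for all $x$ really does force $\mathcal{A}y_0=0$. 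Making that dependence on symmetry explicit, as you do, is a point the paper leaves implicit.
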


\subsection{Fitted norms}
\label{sub:fitted_norms}

In this paper we make use of the practical framework for the parameter-robust stability analysis of perturbed saddle-point problems
introduced in~\cite{hong2023new}. This framework provides a norm fitting technique on an abstract level, which applies to the variational
formulation of various multiphysics models. 

Hereby, the form $c(\cdot,\cdot)$ causing the perturbation of problem~\eqref{PSPP}, contributes to the norm $\Vert \cdot \Vert_Q$
on the space $Q$ according to the definition
\begin{equation} \label{Q_norm}
\Vert q\Vert_Q^2 :=\vert q\vert_Q^2+c(q,q) =: \vert q\vert_Q^2+\vert q\vert_c^2=: \langle \bar{Q}q,q \rangle_{Q'\times Q}.
\end{equation}
Note that the requirement that $\Vert \cdot \Vert_Q$ is a full norm induced by an inner product under which $Q$ is a
Hilbert space already implies that the seminorm $\vert \cdot \vert_Q$ is also induced by an SPSD bilinear form
$d(\cdot,\cdot):Q\times Q \rightarrow \mathbb{R}$, i.e., $\vert q\vert_Q^2=d(q,q)$.
Thus the bilinear form $c(p,q)+d(p,q)$ has to be symmetric positive definite (SPD) on $Q$, defining a linear operator
$\bar{Q}: Q\rightarrow Q'$ by $\langle \bar{Q}p,q\rangle := c(p,q)+d(p,q)$.

Next, the norm $\Vert \cdot \Vert_V$ on the space $V$ is split into two seminorms according to the definition
\begin{align}\label{V_norm}
\Vert v \Vert^2_{V} &:= \vert v\vert^2_{V} +\vert v \vert^2_{b}
\end{align}
where $\vert \cdot \vert_V$ becomes a norm on ${\rm Ker}(B)$ satisfying  
$$
\vert v\vert_V^2 \eqsim  a(v,v) \qquad \forall v \in {\rm Ker}(B).
$$ 
Moreover, and this is the key idea of the construction, $\vert \cdot\vert_b$ is defined by 
\begin{align}\label{seminorm_b}
\vert v \vert^2_{b}:= \langle Bv, \bar{Q}^{-1}Bv \rangle_{Q' \times Q} = 
\Vert Bv \Vert_{Q'}^2. 
\end{align}
relating the two norms $\Vert \cdot \Vert_Q$ and $\Vert \cdot \Vert_V$ to each other. 

Then $\bar{Q}^{-1}: Q' \rightarrow Q$ is an isometric isomorphism because $\bar{Q}$ is the Riesz isomorphism, i.e.,   
\begin{align*}
\Vert \bar{Q}^{-1}Bv\Vert_Q^2 =& (\bar{Q}^{-1}Bv,\bar{Q}^{-1}Bv)_Q 
= \langle \bar{Q}\bar{Q}^{-1}Bv,\bar{Q}^{-1}Bv \rangle_{Q'\times Q} \\
=& \langle Bv, \bar{Q}^{-1}Bv\rangle_{Q'\times Q} =\Vert Bv\Vert_{Q'}^2.
\end{align*}

Note that both $\vert \cdot \vert_V$ and $\vert \cdot \vert_b$ can be seminorms, one of them could even be the
trivial seminorm (zero), provided their sum is a norm. The same applies to the seminorms $\vert \cdot \vert_Q$
and $\vert \cdot \vert_c$ which have to add up to a full norm only.
The splitting~\eqref{V_norm} gives rise to a Schur complement type operator, corresponding to a regularized bilinear
form resulting from $\mathcal{a}((\cdot;\cdot),(\cdot;\cdot))$ by replacing $c(\cdot,\cdot)$ with $(\cdot,\cdot)_Q$.

The main theoretical result in~\cite{hong2023new} uses the following notion of {\em{fitted norms}}.

{\begin{definition}[Hong et al.~\cite{hong2023new}]\label{A1}
For two Hilbert spaces $V$ and $Q$, a norm $\Vert \cdot\Vert_V$ on $V$ and a norm $\Vert \cdot\Vert_Q$ on $Q$ 
are called fitted if they satisfy the splittings~\eqref{Q_norm} and~\eqref{V_norm}, respectively, where $\vert \cdot\vert_Q$ 
is a seminorm on~$Q$ and $\vert \cdot \vert_V$ and $\vert \cdot\vert_b$ are seminorms on $V,$ 
the latter defined according to~\eqref{seminorm_b}.
\end{definition}}


The following theorem in~\cite{hong2023new} can be exploited for constructing a parameter-robust
preconditioner for the Hodge Laplacian in mixed form.
\begin{theorem}[Hong et al.~\cite{hong2023new}]\label{thm:2}
Let $\Vert \cdot\Vert_V$ and $\Vert \cdot \Vert_Q$ be {fitted} norms according to Definition~\ref{A1}, 
which immediately implies the continuity of $b(\cdot,\cdot)$ and $c(\cdot,\cdot)$
in these norms with $\bar{C}_b=1$ and
$\bar{C}_c=1$, cf.~\eqref{b_cont}--\eqref{c_cont}.
Consider the bilinear form $\mathcal{a}((\cdot;\cdot),(\cdot;\cdot))$ 
defined in~\eqref{bilinear_form} where $a(\cdot,\cdot)$ is continuous,
i.e., \eqref{a_cont} holds, and $a(\cdot,\cdot)$ and 
$c(\cdot,\cdot)$ are symmetric positive semidefinite. 
Assume, further, that $a(\cdot,\cdot)$ satisfies the coercivity estimate
\begin{equation}\label{coerc_a}
a(v,v)\ge \underline{C}_a \vert v\vert_V^2, \qquad \forall v\in V,
\end{equation}
and that there exists a constant 
$\underline{\beta}>0$ such that 
\begin{equation}\label{inf_sup_b}
\sup_{\substack{v\in V\\ v\neq 0}} 
\frac{b(v,q)}{\Vert v\Vert_V}\ge \underline{\beta}  \vert q\vert_Q, \qquad \forall q\in Q.
\end{equation} 
Then the bilinear form $\mathcal{a}((\cdot;\cdot),(\cdot;\cdot))$ is continuous and inf-sup stable under the combined norm 
$\Vert \cdot \Vert_Y$ defined in~\eqref{y_norm}, i.e., the conditions~\eqref{boundedness_A} and \eqref{inf_sup_A} hold.
\end{theorem}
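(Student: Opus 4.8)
The plan is to verify the two conditions of \Cref{thm:1}, namely the boundedness \eqref{boundedness_A} and the inf-sup estimate \eqref{inf_sup_A}, in the combined norm $\|\cdot\|_Y$ of \eqref{y_norm}. The first is routine and the second carries the whole weight of the argument. For \eqref{boundedness_A} I would expand $\mathcal{a}((u;p),(v;q))$ as in \eqref{bilinear_form} and bound its four terms using the continuity \eqref{a_cont} of $a$ together with the fitted-norm continuity constants $\bar{C}_b=\bar{C}_c=1$ guaranteed by \Cref{A1}. Since $|u|_V,|u|_b\le\|u\|_V$ and $|p|_c,|p|_Q\le\|p\|_Q$, grouping the terms and applying Cauchy--Schwarz in $\mathbb{R}^2$ yields $\mathcal{a}(x,y)\le\bar{C}\|x\|_Y\|y\|_Y$ with $\bar{C}$ depending only on $\bar{C}_a$, so no difficulty arises.

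The heart of the proof is \eqref{inf_sup_A}. Fixing $x=(u;p)\neq0$, I would construct a single test function $y$ with $\mathcal{a}(x,y)\ge\underline{c}\,\|x\|_Y^2$ and $\|y\|_Y\le\bar{c}\,\|x\|_Y$, which upon division gives the uniform lower bound $\underline{\alpha}=\underline{c}/\bar{c}$. Recalling $\|x\|_Y^2=|u|_V^2+|u|_b^2+|p|_Q^2+|p|_c^2$, I would take $y=(u;-p)+\delta_1(0;q^\ast)+\delta_2(v_p;0)$ with small weights $\delta_1,\delta_2>0$ to be fixed, each summand targeting one block of this norm. The reflection $(u;-p)$ gives $\mathcal{a}(x,(u;-p))=a(u,u)+c(p,p)\ge\underline{C}_a|u|_V^2+|p|_c^2$ by the coercivity \eqref{coerc_a}, controlling the $|u|_V$ and $|p|_c$ parts. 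Choosing the Riesz representative $q^\ast:=\bar{Q}^{-1}Bu$, so that $\|q^\ast\|_Q=\|Bu\|_{Q'}=|u|_b$ by \eqref{seminorm_b}, gives $\mathcal{a}(x,(0;q^\ast))=\langle Bu,\bar{Q}^{-1}Bu\rangle_{Q'\times Q}-c(p,q^\ast)=|u|_b^2-c(p,q^\ast)$, controlling $|u|_b$. Finally, selecting as $v_p$ the Riesz representative in $V$ of the functional $b(\cdot,p)$, which by \eqref{inf_sup_b} satisfies $b(v_p,p)=\|v_p\|_V^2\ge\underline{\beta}^2|p|_Q^2$ and $\|v_p\|_V\le\|p\|_Q$, gives $\mathcal{a}(x,(v_p;0))=a(u,v_p)+b(v_p,p)$, whose second term controls $|p|_Q$.

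I expect the main obstacle to be the final balancing. Summing the three contributions and estimating the indefinite cross terms $-\delta_1 c(p,q^\ast)$ and $\delta_2 a(u,v_p)$ by Young's inequality --- using $c(p,q^\ast)\le|p|_c\,|u|_b$ (from the fitted bound $\bar{C}_c=1$ and $|q^\ast|_c\le\|q^\ast\|_Q=|u|_b$) and $|a(u,v_p)|\le\bar{C}_a\|u\|_V\|v_p\|_V\le\bar{C}_a\|u\|_V\|p\|_Q$ from \eqref{a_cont} --- one must fix the Young parameters and then $\delta_1,\delta_2$ in a careful order so that the negative contributions to each of $|u|_V^2,|u|_b^2,|p|_c^2,|p|_Q^2$ are dominated by the corresponding positive terms. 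Concretely, the $|p|_Q^2$ balance forces the Young parameter on $a(u,v_p)$ to exceed $\bar{C}_a/(2\underline{\beta}^2)$, after which $\delta_1$ is taken small to preserve the $|p|_c^2$ term and $\delta_2$ smaller still to preserve the $|u|_V^2$ and $|u|_b^2$ terms, leaving $\mathcal{a}(x,y)\ge\underline{c}\,\|x\|_Y^2$ with $\underline{c}>0$ depending only on $\underline{C}_a,\bar{C}_a,\underline{\beta}$. A companion computation, using $\|q^\ast\|_Q=|u|_b\le\|u\|_V$ and $\|v_p\|_V\le\|p\|_Q$ together with the triangle inequality, bounds $\|y\|_Y\le\bar{c}\,\|x\|_Y$. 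The delicate point throughout is that every constant must remain independent of the parameters entering through the operator $\bar{Q}$, which is precisely what the fitted splittings \eqref{Q_norm} and \eqref{V_norm} are designed to ensure.
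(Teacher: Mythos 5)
This theorem is quoted from Hong et al.~\cite{hong2023new}; the paper states it without reproducing a proof, so there is no internal argument to compare yours against. Your proposal is nonetheless a correct, self-contained proof in the standard Babu\v{s}ka test-function style, and it matches the spirit of the cited reference. The continuity part is indeed routine once one notes $b(v,q)\le |v|_b\,\Vert q\Vert_Q$ and $c(p,q)\le |p|_c\,|q|_c$ from the fitted splittings. For the inf-sup part, your three building blocks are exactly the right ones: the reflection $(u;-p)$ kills the $b$-terms and yields $a(u,u)+c(p,p)\ge \underline{C}_a|u|_V^2+|p|_c^2$; the Riesz lift $q^\ast=\bar{Q}^{-1}Bu$ recovers $|u|_b^2$ with $\Vert q^\ast\Vert_Q=|u|_b$; and the Riesz representative $v_p$ of $b(\cdot,p)$ recovers $\underline{\beta}^2|p|_Q^2$ with $\Vert v_p\Vert_V\le\Vert p\Vert_Q$. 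The Young-inequality bookkeeping you describe closes (fix the Young parameter on the $a(u,v_p)$ cross term from the $|p|_Q^2$ balance, then choose $\delta_1\le 1$ and $\delta_2$ small), and all constants depend only on $\underline{C}_a$, $\bar{C}_a$, $\underline{\beta}$, which is precisely the parameter-robustness claimed.
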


Note that the norm fitting can also be performed by first fixing the full norm on $V$ (instead of the full norm on $Q$ as
described above), resulting in the splitting
\begin{subequations} \label{eqs:flipped_norms}
    \begin{align}
    \Vert v \Vert_V^2 &\coloneqq  \vert v \vert_V^2 + a(v,v)=:\langle \bar{V}v,v \rangle_{V'\times V},\\
    \Vert q \Vert_Q^2 &\coloneqq  \vert q \vert_Q^2 + \vert q\vert_b^2,
\end{align}
\end{subequations}
where $\bar{V}:V\rightarrow V'$ is an invertible linear operator. In this case, the roles of $a$ and $c$ are reversed, as well as the roles of $v$ and $q$, i.e. $\vert q\vert_Q^2$ is equivalent to $c(q,q)$
and $\vert q\vert_b^2 \coloneqq \langle B^T q, \bar{V}^{-1}B^Tq \rangle_{V'\times V}$.

\section{Analysis of the Hodge Laplace problem using fitted norms}
\label{sec:analysis_HL}

The goal of this work is to use the fitted norm approach presented in~\cite{hong2023new} to construct a norm-equivalent preconditioner $\mathcal{P}$ for the operator~\eqref{A_block_form}
associated with problem~\eqref{eqs: Hodge Laplace problem original}.
In order to guarantee that the condition number of the preconditioned operator $\mathcal{P} \mathcal{A}$ then neither depends on the parameter $\alpha$ in problem~\eqref{eqs: Hodge Laplace problem original}
nor on any discretization parameters in a compatible discrete version of it, we first study the Hodge Laplacian in mixed form both on the continuous and discrete level.

\subsection{An inf-sup condition}

We consider the linear operator~$\mathcal{A}: Y \rightarrow Y'$ of the form~\eqref{A_block_form} with $Y:=V\times Q$ where $V \coloneqq H \Lambda^k$ and $Q \coloneqq H \Lambda^{k - 1}$
associated with the system~\eqref{eqs: Hodge Laplace problem}, which, for convenience, we rewrite in the equivalent form
\begin{subequations} \label{eqs: Hodge Laplace problem}
	\begin{align}
	(du, dv)_\Omega + (dp, v)_\Omega &= \langle f, v \rangle, &
	\forall v &\in H \Lambda^k, \\
	 (u, dq)_\Omega - (\alpha p, q)_\Omega&= - \langle g, q \rangle, &
	\forall q &\in H \Lambda^{k - 1}.	
\end{align}
\end{subequations}
The bounded linear operators $A, B$ and $C$ in~\eqref{eqs:operatorsABC} can then be identified as
\begin{subequations}\label{eqs:ABC_HodgeLaplacian}
\begin{align}
	\langle A u, v \rangle &\coloneqq (du, dv)_\Omega 
    , \label{eq:A_HodgeLaplacian} \\
	\langle B u, q \rangle &\coloneqq (u, dq)_\Omega 
    , \label{eq:B_HodgeLaplacian} \\
	\langle C p, q \rangle &\coloneqq (\alpha p, q)_\Omega 
    . \label{eq:C_HodgeLaplacian}
\end{align}
\end{subequations}
Following~\cite[Example 3.5]{hong2023new} we define the following seminorms on the Hilbert spaces $V \coloneqq H \Lambda^k$ and $Q \coloneqq H \Lambda^{k - 1}$:
\begin{subequations}\label{eqs:VQ_seminorms}
\begin{align}
	|v|_V^2 &\coloneqq \| dv \|_\Omega^2, \label{eq:V_seminorm} \\
	|q|_Q^2 &\coloneqq (1 + \alpha) \| dq \|_\Omega^2, \label{eq:Q_seminorm}
\end{align}
\end{subequations}
and recall that according to the general framework $|v|_b^2 = \langle B v, \bar Q^{-1} B v \rangle$ with $\bar Q$ defined in~\eqref{Q_norm}.
The fitted norms introduced in \Cref{sub:fitted_norms} now become
\begin{subequations}\label{eqs:VQ_norms}
\begin{align}
	\| q \|_Q^2 
	&= |q|_Q^2 + \langle C q, q \rangle
	= \alpha \| q \|_\Omega^2 + (1 + \alpha) \| dq \|_\Omega^2, \label{eq:Q_norm} \\
	\| v \|_V^2 
	&= |v|_V^2 + |v|_b^2
	= \| dv \|_\Omega^2 + ((\alpha I + (1 + \alpha) d^*d)^{-1} d^* v, d^* v)_\Omega .\label{eq:V_norm}	
\end{align}
\end{subequations}
%
We now proceed to verify the assumptions of Theorem~\ref{thm:2}.

First, in view of the definitions~\eqref{eq:A_HodgeLaplacian},~\eqref{eq:V_seminorm} and ~\eqref{eq:V_norm}, we note that both continuity of $a(\cdot,\cdot)$
in $V$-norm as well as coercivity of $a(\cdot,\cdot)$ in~$V$-seminorm are obvious; the corresponding estimates~\eqref{a_cont} and \eqref{coerc_a} both are
satisfied trivially, i.e., for $\bar{C}_a=1$ and $\underline{C}_a=1$.

\begin{lemma} \label{lemma:0}
	$| \cdot |_V$ is a norm on ${\rm Ker}(B)$.
\end{lemma}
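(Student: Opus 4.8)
The plan is to first characterize $\operatorname{Ker}(B)$ explicitly, then observe that $|\cdot|_V$ is trivially a seminorm, so that the only content of the claim is \emph{definiteness} on that kernel, which I expect to follow directly from the Poincaré inequality of \Cref{lemma:Poincare}.

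\textbf{Step 1: Identify the kernel.} From the definition~\eqref{eq:B_HodgeLaplacian}, $v \in \operatorname{Ker}(B)$ means $\langle Bv, q\rangle = (v, d_{k-1} q)_\Omega = 0$ for all $q \in H\Lambda^{k-1}$. Using the adjoint relation~\eqref{eq:dstar_def}, namely $(v, d_{k-1}q)_\Omega = (d_{k-1}^* v, q)_\Omega$, which is justified because we work in a discrete subcomplex where $d_{k-1}^* v \in H\Lambda^{k-1}$ is guaranteed, this pairing vanishes for all $q$ precisely when $d_{k-1}^* v = 0$. Hence
\begin{equation*}
\operatorname{Ker}(B) = \{\, v \in H\Lambda^k : d_{k-1}^* v = 0 \,\}.
\end{equation*}

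\textbf{Step 2: Seminorm and definiteness.} The quantity $|v|_V = \|d_k v\|_\Omega$ is patently a seminorm: absolute homogeneity and the triangle inequality are inherited from the $L^2$-norm together with the linearity of $d_k$. It therefore remains only to verify definiteness on $\operatorname{Ker}(B)$. Suppose $v \in \operatorname{Ker}(B)$ with $|v|_V = 0$. Then $\|d_k v\|_\Omega = 0$, i.e.\ $d_k v = 0$, while Step~1 gives $d_{k-1}^* v = 0$. The latter condition is exactly the hypothesis required to apply \Cref{lemma:Poincare}, which yields $\|v\|_\Omega^2 \le c_k^P \|d_k v\|_\Omega^2 = 0$, whence $v = 0$.

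\textbf{Main obstacle.} There is no substantial difficulty here; the only subtle point is the passage $(v, d_{k-1}q)_\Omega = (d_{k-1}^* v, q)_\Omega$ in Step~1, which relies on $d_{k-1}^* v$ being an element of $H\Lambda^{k-1}$ rather than merely a functional. This is precisely the discrete-subcomplex regularity noted after~\eqref{eq:dstar_def}, so the identification of the kernel is legitimate, and the Poincaré inequality then closes the argument immediately.
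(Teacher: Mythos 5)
Your proof is correct and follows the same route as the paper: identify $\operatorname{Ker}(B)$ with $\{v : d_{k-1}^* v = 0\}$ and invoke the Poincaré inequality of \Cref{lemma:Poincare} to get definiteness. The paper's version is just a one-line compression of your argument, so there is nothing to add.
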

\begin{proof}
	If $v \in {\rm Ker}(B)$, then $d^*v = 0$. Therefore, the Poincaré inequality (\Cref{lemma:Poincare}) gives $\| v \|_\Omega \lesssim \| dv \|_\Omega = |v|_V$ from which we conclude that it is a norm.
\end{proof}

Next, we prove the inf-sup condition~\eqref{inf_sup_b} for the operator $B$ defined in~\eqref{eq:B_HodgeLaplacian}.

\begin{lemma}\label{lemma:1}
The bilinear form $b(\cdot,\cdot)$ defining the linear operator $B$ in~\eqref{eq:B_HodgeLaplacian} is uniformly inf-sup stable in $\alpha$
for the fitted norms given by~\eqref{eqs:VQ_norms}. Further, the constant $\underline{\beta}$ in condition~\eqref{inf_sup_b} satisfies the
estimate $\underline{\beta} \ge 1$ for this choice of (semi)norms.
\end{lemma}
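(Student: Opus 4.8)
The plan is to verify the inf-sup condition~\eqref{inf_sup_b} constructively: for each $q \in Q = H\Lambda^{k-1}$ I would exhibit an explicit test function $v \in V = H\Lambda^k$ whose Rayleigh quotient $b(v,q)/\Vert v\Vert_V$ already dominates $|q|_Q$. The natural candidate is $v = dq = d_{k-1}q$, which is tailored to exploit the complex property $d_k d_{k-1} = 0$.

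First I would dispose of the trivial case: if $dq = 0$ then $|q|_Q = 0$ by~\eqref{eq:Q_seminorm} and the inequality holds vacuously, so assume $dq \neq 0$. Taking $v = dq$, the decisive structural observation is that $d_k v = d_k d_{k-1} q = 0$; hence the $V$-seminorm $|v|_V^2 = \Vert dv\Vert_\Omega^2$ vanishes and $\Vert dq\Vert_V^2 = |dq|_b^2$ by~\eqref{eq:V_norm}. At the same time the numerator is simply $b(dq,q) = (dq,dq)_\Omega = \Vert dq\Vert_\Omega^2$.

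The only nonroutine step, and the main obstacle, is to control the abstractly defined seminorm $|dq|_b$, which a priori involves the inverse Riesz map $\bar Q^{-1}$. Rather than inverting $\bar Q$ explicitly, I would use the identity $|dq|_b = \Vert B(dq)\Vert_{Q'}$ from~\eqref{seminorm_b} together with the dual-norm characterization
\[
|dq|_b = \sup_{q' \neq 0}\frac{(dq, dq')_\Omega}{\Vert q'\Vert_Q}.
\]
Applying the Cauchy--Schwarz inequality in the numerator and bounding $\Vert q'\Vert_Q^2 \ge (1+\alpha)\Vert dq'\Vert_\Omega^2$ from the definition~\eqref{eq:Q_norm} of the fitted $Q$-norm yields $(dq,dq')_\Omega \le \Vert dq\Vert_\Omega\,\Vert q'\Vert_Q/\sqrt{1+\alpha}$, and therefore the upper bound $|dq|_b \le \Vert dq\Vert_\Omega/\sqrt{1+\alpha}$.

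Combining the three ingredients gives
\[
\sup_{v \neq 0}\frac{b(v,q)}{\Vert v\Vert_V} \ge \frac{b(dq,q)}{\Vert dq\Vert_V} = \frac{\Vert dq\Vert_\Omega^2}{|dq|_b} \ge \sqrt{1+\alpha}\,\Vert dq\Vert_\Omega = |q|_Q,
\]
where the last equality is again~\eqref{eq:Q_seminorm}. Since this holds for every $q$, the inf-sup constant satisfies $\underline\beta \ge 1$ independently of $\alpha$, as claimed. I expect no further subtleties beyond the care needed in the dual-norm bound, which is precisely what converts the opaque $|\cdot|_b$ term into an elementary estimate.
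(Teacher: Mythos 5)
Your proof is correct and takes essentially the same approach as the paper: the same test function $v_0 = dq$, the observation that $d v_0 = 0$, and the same key estimate $\Vert dq\Vert_V \le (1+\alpha)^{-1/2}\Vert dq\Vert_\Omega$ leading to $\underline{\beta}\ge 1$. The only (cosmetic) difference is that you derive that estimate from the dual-norm characterization of $|\cdot|_b$ plus Cauchy--Schwarz, whereas the paper bounds $\bigl((\alpha I + (1+\alpha)d^*d)^{-1} d^*dq,\, d^*dq\bigr)_\Omega$ directly by an operator inequality; both routes are equally valid.
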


\begin{proof}
According to~\eqref{eq:B_HodgeLaplacian} we have $B=d^*: V \rightarrow Q'$.
For given $q\in Q$, choose $v_0=dq \in V$ to obtain
\begin{align} \label{eq: calculation}
\Vert v_0 \Vert_V^2&= \| d dq \|_\Omega^2 + \left((\alpha I+ (1 + \alpha) d^*d)^{-1}   d^*dq, d^*dq \right)_\Omega \nonumber \\
&=\left((\alpha I+ (1 + \alpha) d^*d)^{-1}   d^*dq, d^*dq \right)_\Omega \nonumber \\
&\le \left(q, (1 + \alpha)^{-1} d^*dq \right)_\Omega \nonumber \\
&= (1 + \alpha)^{-1} \| dq \|_\Omega ^2
\end{align} 
and therefore
\begin{align*}
\sup_{v \in V} \frac{b(v,q)}{\Vert v \Vert_V} &\ge \frac{b(v_0,q)}{\Vert v_0\Vert_V} = \frac{(dq,  dq)_\Omega}{\Vert v_0\Vert_V}  \\
&\ge  \frac{\| dq \|_\Omega^2}{(1 + \alpha)^{-\frac{1}{2}} \| dq \|_\Omega}= \vert q \vert_Q, 
\end{align*}
showing that~\eqref{inf_sup_b} holds true with $\underline{\beta} \ge 1$.
\end{proof}

\subsection{A norm-equivalent preconditioner}

Lemma~\ref{lemma:1} together with the other observations in the previous subsection show that the assumptions in Theorem~\ref{thm:2}
are satified for the fitted norms given by~\eqref{eqs:VQ_norms}
and they therefore represent a robust block diagonal preconditioner.

The $V$-norm given by~\eqref{eq:V_norm}, however, is not convenient in practice. The following lemma provides
an equivalent norm that greatly simplifies the preconditioner.

\begin{lemma}\label{lemma:2}
The norm $ (1 + \alpha)^{-1} \| v \|_\Omega^2 + \| dv \|_\Omega^2$ is equivalent to the $V$-norm defined in~\eqref{eq:V_norm}, that is,
\begin{align}\label{eq:equivalent_V_norms}
\| v \|_V^2 \eqsim (1 + \alpha)^{-1} \| v \|_\Omega^2 + \| dv \|_\Omega^2, & \qquad \forall v \in V = H \Lambda^k.
\end{align}
\end{lemma}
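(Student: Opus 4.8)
The plan is to prove the norm equivalence \eqref{eq:equivalent_V_norms} by comparing the two $V$-norms term by term, using the orthogonal splitting supplied by the Hodge decomposition (\Cref{lemma:Hodge}). Since both candidate norms share the identical term $\| dv \|_\Omega^2$, it suffices to show that the perturbation term $|v|_b^2 = ((\alpha I + (1+\alpha) d^*d)^{-1} d^* v, d^* v)_\Omega$ is equivalent, uniformly in $\alpha$, to $(1+\alpha)^{-1} \| v \|_\Omega^2$ \emph{after} accounting for the $\| dv \|_\Omega^2$ contribution that the latter also carries. More precisely, I would first rewrite $\| v \|_\Omega^2$ using the Hodge decomposition $v = d_{k-1} w + d_k^* z$, which gives $\| v \|_\Omega^2 = \| dw \|_\Omega^2 + \| d^* z \|_\Omega^2$ by orthogonality, and observe that $d^* v = d^* d^* z = 0$ while $dv = d d^* z$ together with $d w$-type terms — so the two norms decouple across the two summands of the decomposition.

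The key technical step is to analyze the operator $M := (\alpha I + (1+\alpha) d^*d)^{-1}$ on the range of $d^*$, i.e.\ on the exact part of the complex. On the subspace where $v = d_{k-1} w$ (so $d^* z = 0$), one has $d^* v$ supported on $H\Lambda^{k-1}$, and I would diagonalize $M$ using the spectral decomposition of the self-adjoint operator $d^* d$. Writing the eigenvalues of $d^*d$ as $\lambda \ge 0$, the symbol of $M$ is $(\alpha + (1+\alpha)\lambda)^{-1}$, and the quantity $|v|_b^2$ becomes $\sum_\lambda (\alpha + (1+\alpha)\lambda)^{-1} \mu_\lambda$ where $\mu_\lambda$ are the spectral weights of $d^* v$. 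The goal is then to show the scalar equivalence
\begin{equation*}
\frac{\lambda}{\alpha + (1+\alpha)\lambda} \eqsim (1+\alpha)^{-1} \quad\text{on the relevant spectrum},
\end{equation*}
modulo the $\| dv \|_\Omega^2 = \sum_\lambda \lambda \,\nu_\lambda$ term, where I use the Poincaré inequality (\Cref{lemma:Poincare}) to bound $\lambda$ away from $0$ on the exact part of the complex. This lower bound on $\lambda$ is exactly what prevents the symbol $(\alpha + (1+\alpha)\lambda)^{-1}$ from blowing up and makes the two expressions comparable.

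The main obstacle I anticipate is handling the two Hodge summands with different behavior in a single clean argument: on the $d_k^* z$ part, $d^* v = 0$ so $|v|_b^2$ contributes nothing and the equivalence must come entirely from controlling $(1+\alpha)^{-1}\| v \|_\Omega^2$ by $\| dv \|_\Omega^2$ via Poincaré, whereas on the $d_{k-1} w$ part it is the $|v|_b^2$ term that carries the mass. Reconciling these — ensuring that the constants in the equivalence are genuinely independent of $\alpha$ across the full spectrum of $d^*d$, including the transition region where $\lambda$ is of order one — is the delicate point. I would therefore organize the proof as a two-sided estimate, bounding $|v|_b^2$ above and below by elementary manipulation of the resolvent symbol, and then assemble the pieces using the orthogonality of the Hodge decomposition together with the Poincaré inequality to absorb the $L^2$ mass of the closed part into $\| dv \|_\Omega^2$. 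A cleaner alternative worth attempting is to avoid spectral calculus altogether: bound $|v|_b^2$ directly by testing the definition of $\bar Q^{-1}$, namely noting that $\langle Bv, \bar Q^{-1} Bv\rangle = \| Bv \|_{Q'}^2 = \sup_{q} (d^* v, q)_\Omega^2 / \| q \|_Q^2$, and estimating this supremum from above and below by judicious choices of $q$, which may yield the equivalence with explicit constants without invoking the spectral theorem.
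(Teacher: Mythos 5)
Your plan is correct and follows essentially the same route as the paper's proof: Hodge-decompose $v = d_{k-1}w + d_k^*z$, use the Poincar\'e inequality on $H\Lambda^{k-1}$ to bound the resolvent $(\alpha I + (1+\alpha)d^*d)^{-1}$ from below on the exact part (the paper does this as a quadratic-form inequality rather than via the spectral symbol, but it is the same estimate), and use the Poincar\'e inequality at level $k$ to absorb $(1+\alpha)^{-1}\|d_k^*z\|_\Omega^2$ into $\|d_kv\|_\Omega^2$. The only blemish is the aside that $dv$ contains ``$dw$-type terms'' (it does not, since $d_kd_{k-1}=0$), which your later, correct treatment of the two summands supersedes.
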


\begin{proof}
In the proof, to avoid any ambiguity, we use the more explicit notation $d_k : H \Lambda^k \to H \Lambda^{k + 1}$
for the differential $d_k$ satisfying $d_{k + 1} d_k = 0$. 

Let $v \in H \Lambda^{k}$ be arbitrary and let its Hodge decomposition, cf.~\Cref{lemma:Hodge}, be given by
\begin{align}\label{eq:Helmholtz_1}
	v = d_{k-1} w + d^*_{k} z
\end{align}
with $w \in H \Lambda^{k-1}$ and $z \in H \Lambda^{k+1}$.

We first show the $\lesssim$ direction:
\begin{align}
\Vert v \Vert_V^2 &= \vert v \vert_V^2+\langle 
B v, \bar Q ^{-1} B v \rangle_{Q'\times Q} \nonumber\\
&= \| d_k v \|_\Omega^2 + \left((\alpha I+(1 + \alpha) d_{k-1}^* d_{k-1})^{-1} d_{k-1}^* v, d_{k-1}^* v \right)_\Omega
\nonumber\\
&= \| d_k v \|_\Omega^2 + \left((\alpha I+(1 + \alpha) d_{k-1}^* d_{k-1})^{-1} d_{k-1}^* d_{k - 1}w, d_{k-1}^* d_{k - 1}w \right)_\Omega
\nonumber\\
&\le \| d_k v \|_\Omega^2 + (1 + \alpha)^{-1} \| d_{k - 1} w \|_\Omega^2
\nonumber\\
&\le \| d_k v \|_\Omega^2 + (1 + \alpha)^{-1} \| v \|_\Omega^2
\end{align}
where the first inequality is due to \eqref{eq: calculation} and the final inequality follows from the orthogonality of the Hodge decomposition. 
For the norm equivalence~\eqref{eq:equivalent_V_norms}, it remains to show the converse
direction $\gtrsim$, i.e. that
\begin{align}\label{eq:V_norms_bound_1}
(d_k v, d_k v )_\Omega + \left((1 + \alpha)^{-1} v, v\right)_\Omega \leq c_1 \Vert v \Vert_V^2
\end{align}
for some constant $c_1 > 0$ independent of $\alpha$. We prove this by bounding the two components of the Hodge decomposition separately.

For the first component, we recall that the Poincar\'e inequality on $H \Lambda^{k-1}$ (\Cref{lemma:Poincare}) reads
\begin{align}\label{eq:Poincare_1}
(q, q)_\Omega \leq c_{k - 1}^P (d_{k-1} q, d_{k-1} q)_\Omega = c_{k - 1}^P (d_{k-1}^* d_{k-1} q, q)_\Omega, & \quad \forall q \in H \Lambda^{k-1}
\ \mbox{s.t.} \  d_{k-2}^* q = 0,
\end{align}
where $c_{k - 1}^P$ denotes the Poincar\'e constant (respectively its square) on $H \Lambda^{k - 1}$.
Hence,
\begin{align*}
((\alpha I + (1 + \alpha) d_{k-1}^* d_{k-1}) q, q )_\Omega & \leq ((\alpha c_{k - 1}^P+ (1 + \alpha)) d_{k-1}^* d_{k-1} q, q )_\Omega \\
& \le (c_{k - 1}^P+1)(1 + \alpha) (d_{k-1}^* d_{k-1} q, q )_\Omega,
\end{align*}
or, equivalently,
\begin{align}\label{eq:V_norms_bound_2}
((\alpha I + (1 + \alpha) d_{k-1}^* d_{k-1})^{-1} q, q )_\Omega & \geq c_2^{-1} (1 + \alpha)^{-1} ((d_{k-1}^* d_{k-1})^{-1} q, q )_\Omega, 
\end{align}
with $c_2 :=c_{k - 1}^P+1$.

Setting $q=d_{k-1}^* v \in H \Lambda^{k-1}$, we note that $d_{k - 2}^*q = 0$ and therefore \eqref{eq:V_norms_bound_2} holds. By the Helmholtz decomposition of \Cref{lemma:Hodge}, we have $d_{k - 1}^*v = d_{k-1}^*d_{k-1} w$ and thus
\begin{align}\label{eq:V_norms_bound_3}
((\alpha I + &(1 + \alpha) d_{k-1}^* d_{k-1})^{-1} d_{k-1}^* v, d_{k-1}^* v)_\Omega \nonumber \\
& \ge c_2^{-1} ((1 + \alpha)^{-1} (d_{k-1}^* d_{k-1})^{-1} d_{k-1}^* v, d_{k-1}^* v)_\Omega  \nonumber \\
& = c_2^{-1} ((1 + \alpha)^{-1} (d_{k-1}^* d_{k-1})^{-1} d_{k-1}^*d_{k-1} w, d_{k-1}^*d_{k-1} w)_\Omega  \nonumber \\
& = c_2^{-1} (1 + \alpha)^{-1} (d_{k-1} w, d_{k-1} w)_\Omega \nonumber \\
& = c_2^{-1} (1 + \alpha)^{-1} \| d_{k-1} w \|_\Omega^2
\end{align}

We are left to bound the second component of the Hodge decomposition. For that, the Poincar\'e inequality (cf.~\Cref{lemma:Poincare}) gives us
\begin{align*}
\| d_{k}^* z \|_\Omega^2 & \le c_k^P \| d_{k} d_{k}^* z \|_\Omega^2
= c_k^P \| d_{k} v \|_\Omega^2 
\end{align*}
where we have also used~\eqref{eq:Helmholtz_1} and the fact that $d_k d_{k-1} w = 0$. Thus
\begin{align}\label{eq:V_norms_bound_4}
c_2^{-1} (1 + \alpha)^{-1} \| d_{k}^* z \|_\Omega^2 &\le c_2^{-1} (1 + \alpha)^{-1} c_k^P \| d_{k} v \|_\Omega^2 \nonumber \\
&\le c_2^{-1} c_k^P \| d_{k} v \|_\Omega^2,
\end{align}
because $\alpha > 0$. We note that the orthogonality of the Hodge decomposition gives
\begin{align}\label{eq:V_norms_bound_5}
	\| v \|_\Omega^2 = \| d_{k-1} w \|_\Omega^2 + \| d_{k}^* z \|_\Omega^2
\end{align}

Using \eqref{eq:V_norms_bound_3} and \eqref{eq:V_norms_bound_4} on the two components, respectively, we get
\begin{align}\label{eq:V_norms_bound_6}
c_2^{-1} (1 + \alpha)^{-1} \| v \|_\Omega^2 
& \le c_2^{-1} c_k^P \| d_{k} v \|_\Omega^2 
+ ((\alpha I + (1 + \alpha) d_{k-1}^* d_{k-1})^{-1} d_{k-1}^* v,d_{k-1}^* v)_\Omega
\end{align}

Multiplying by $c_2$ and adding $\| d_{k} v \|_\Omega^2$ to both sides of~\eqref{eq:V_norms_bound_6} allows us to conclude that
\begin{align*}
(1 + \alpha)^{-1} \| v \|_\Omega^2 + \| d_{k} v \|_\Omega^2
& \le (c_k^P +1) \| d_{k} v \|_\Omega^2 \nonumber\\
& \quad + c_2 (\alpha I + (1 + \alpha) d_{k-1}^* d_{k-1})^{-1} d_{k-1}^* v,d_{k-1}^* v)_\Omega \\
& \le c_1 \Vert v \Vert_V^2
\end{align*}
with $c_1 := \max \{ c_k^P + 1, c_2 \} = 1 + \max\{c_k^P, c_{k-1}^P\}$, completing the proof.
\end{proof}

In summary, from Theorem~\ref{thm:2}, Lemmata~\ref{lemma:0}--\ref{lemma:2} we conclude the following main result of this paper.

\begin{theorem}\label{thm:3}
The linear operator $\mathcal{P}: Y':=V' \times Q' \to V \times Q=:Y$ defined by
\begin{align}\label{eq: preconditioner}
	\mathcal{P} := \begin{bmatrix}
		(1 + \alpha)^{-1} I + d^* d & \\
		& \alpha I + (1 + \alpha) d^* d
	\end{bmatrix}^{-1}
\end{align}
provides a parameter-robust norm-equivalent preconditioner for the operator $\mathcal{A}$ associated with the Hodge-Laplacian in mixed form, i.e.,
$\kappa (\mathcal{P} \mathcal{A}) \le c$ where $c \ge 1$ is a constant that does not depend on the parameter $\alpha$.
\end{theorem}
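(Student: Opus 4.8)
The plan is to read off the constants verified in the preceding lemmata, conclude stability in the fitted norms via Theorem~\ref{thm:2}, and then convert this into the stated condition-number bound through the operator preconditioning principle~\cite{mardal2011preconditioning}. First I would collect the hypotheses of Theorem~\ref{thm:2}. From the definitions~\eqref{eq:A_HodgeLaplacian},~\eqref{eq:V_seminorm} and~\eqref{eq:V_norm} the form $a(\cdot,\cdot)$ is continuous and coercive in the $V$-(semi)norm with $\bar{C}_a=\underline{C}_a=1$; the fitted construction~\eqref{eqs:VQ_norms} forces $\bar{C}_b=\bar{C}_c=1$; Lemma~\ref{lemma:0} gives that $|\cdot|_V$ is a norm on $\Ker(B)$; and Lemma~\ref{lemma:1} supplies the inf-sup condition~\eqref{inf_sup_b} with $\underline{\beta}\ge 1$. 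Theorem~\ref{thm:2} then yields the boundedness~\eqref{boundedness_A} and inf-sup stability~\eqref{inf_sup_A} of $\mathcal{a}$ in the combined norm $\|\cdot\|_Y$ assembled from~\eqref{eqs:VQ_norms}, with constants $\bar{C}$ and $\underline{\alpha}$ that are independent of $\alpha$.

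Second, I would replace the impractical $V$-norm~\eqref{eq:V_norm} by the implementable one. Lemma~\ref{lemma:2} shows $\|v\|_V^2\eqsim (1+\alpha)^{-1}\|v\|_\Omega^2+\|dv\|_\Omega^2=:\|v\|_{\tilde V}^2$ with equivalence constants depending only on the Poincaré constants $c_{k-1}^P,c_k^P$, hence not on $\alpha$. Setting $\|(v;q)\|_{\tilde Y}^2:=\|v\|_{\tilde V}^2+\|q\|_Q^2$ gives $\|\cdot\|_{\tilde Y}\eqsim\|\cdot\|_Y$ uniformly in $\alpha$, so the boundedness and inf-sup bounds of $\mathcal{a}$ carry over verbatim to $\|\cdot\|_{\tilde Y}$, the stability and continuity constants merely acquiring the bounded equivalence factors.

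The decisive observation is that $\|\cdot\|_{\tilde Y}$ is block diagonal with Riesz isomorphism $Y'\to Y$ equal to $\mathcal{P}$: the $\tilde V$-inner product is induced by $(1+\alpha)^{-1}I+d^*d$ and the $Q$-inner product~\eqref{eq:Q_norm} by $\alpha I+(1+\alpha)d^*d$, so the associated Riesz map is $\operatorname{diag}\bigl((1+\alpha)^{-1}I+d^*d,\ \alpha I+(1+\alpha)d^*d\bigr)^{-1}=\mathcal{P}$. Since $\mathcal{A}$ is symmetric, cf.~\eqref{A_block_form}, and $\mathcal{P}$ is symmetric positive definite, the operator preconditioning framework~\cite{mardal2011preconditioning} guarantees that $\mathcal{P}\mathcal{A}$ is self-adjoint with respect to the $\mathcal{P}^{-1}$-inner product and that its spectral condition number is controlled by the ratio of the continuity and inf-sup constants of $\mathcal{a}$ in $\|\cdot\|_{\tilde Y}$. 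This yields $\kappa(\mathcal{P}\mathcal{A})\le \bar{C}/\underline{\alpha}=:c$, with $c\ge 1$ by the universal lower bound $\kappa\ge 1$, and $c$ independent of $\alpha$ because each constant entering it is.

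I expect the only delicate point to be the $\alpha$-uniformity bookkeeping in the second step: one must check that the equivalence constants of Lemma~\ref{lemma:2} enter the final bound only multiplicatively and stay bounded as $\alpha$ ranges over $(0,\infty)$, and confirm that the Riesz operator of the simplified norm coincides exactly with~\eqref{eq: preconditioner}. Everything else is a direct application of Theorem~\ref{thm:2} and the standard operator preconditioning estimate. The same reasoning applies on a compatible FEEC subcomplex, where the Poincaré inequality and Hodge decomposition hold with constants independent of the mesh size, so the preconditioner remains robust at the discrete level as well.
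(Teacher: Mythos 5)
Your proposal is correct and follows essentially the same route as the paper, which proves Theorem~\ref{thm:3} precisely by combining Theorem~\ref{thm:2} with Lemmata~\ref{lemma:0}--\ref{lemma:2} and invoking operator preconditioning: the Riesz map of the block-diagonal norm built from the simplified $V$-norm of Lemma~\ref{lemma:2} and the $Q$-norm~\eqref{eq:Q_norm} is exactly $\mathcal{P}$. Your bookkeeping of the $\alpha$-uniform constants and the identification of the Riesz operator are exactly the (unwritten) details the paper's one-line summary relies on.
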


\subsection{Reversing the definitions}
\label{sub:flipping_the_definitions}

In this section, we take a different perspective on the problem by choosing the fitted norms in the ``opposite direction''. Following \cite[Rem.~2.14]{hong2023new}, we first prescribe the full norm on $V$ and use that to define the norm on $Q$.
Let
\begin{align}
	|q|_Q^2 &\coloneqq \alpha \| q \|_\Omega^2, &
	|v|_V^2 &\coloneqq (1 + \alpha)^{-1} \| \Pi v \|_\Omega^2,
\end{align}
where $\Pi$ is the $L^2$ projection onto $d H \Lambda^{k - 1}$.
Let $|q|_{b}^2 = \langle B^T q, \bar V^{-1} B^T q \rangle$ so that the fitted norms (cf. \eqref{eqs:flipped_norms}) become
\begin{subequations}\label{eqs:flipped_VQ_norms}
\begin{align} 
	\| v \|_V^2 \label{eq:flipped_V_norm}
	&= |v|_V^2 + \langle A v, v \rangle
	= (1 + \alpha)^{-1} \| \Pi v \|_\Omega^2 + \| dv \|_\Omega^2 \\
	\| q \|_Q^2 
	&= |q|_Q^2 + |q|_{b}^2
	= \alpha \| q \|_\Omega^2 + \left((1 + \alpha)^{-1} \Pi + d^*d)^{-1} d q, d q \right)_\Omega.
\end{align}
\end{subequations}

Clearly, these norms are not practical in the context of preconditioning. We therefore first rewrite the $Q$-norm. Using the properties of the Hilbert complex and $\Pi$, we note that
\begin{align}
    ((1 + \alpha)^{-1} \Pi + d^*d) d q = (1 + \alpha)^{-1} \Pi dq + 0 = (1 + \alpha)^{-1} dq
\end{align}
and thus $((1 + \alpha)^{-1} \Pi + d^*d)^{-1} d q = (1 + \alpha) dq$. This allows us to rewrite the $Q$-norm as
\begin{align}
    \| q \|_Q^2 
    = \alpha \| q \|_\Omega^2 + (1 + \alpha) \| dq \|_\Omega^2,
\end{align}
which is exactly the same as \eqref{eq:Q_norm} in the previous subsection.

Following the same path as for the norms given by \eqref{eqs:VQ_norms}, we will verify that the norms \eqref{eqs:flipped_VQ_norms} satisfy the assumptions of Theorem~\ref{thm:2}.
Again, the continuity and coercivity of $c(\cdot , \cdot)$ in the $Q$-seminorm are obvious.
The fact that $| \cdot |_Q $ is a norm on ${\rm Ker}(B^T)$ trivially follows from its definition, because it is a norm on the whole space.
It remains to show the analogue of Lemma \ref{lemma:1}.
\begin{lemma} \label{lemma:flipped_b_infsup}
    The bilinear form $b(\cdot,\cdot)$ defining the linear operator $B^T$ is uniformly inf-sup stable in $\alpha$ for the fitted norms given by~\eqref{eqs:flipped_VQ_norms}, concretely
    \begin{align}
        \sup_{q \in Q} \frac{b(v, q)}{\norm{q}_Q} \geq \underline{\beta}^f |v|_V
    \end{align}
    for a constant $\underline{\beta}^f \geq  \left( c_{k-1}^P + 1\right)^{-\frac12}$ independent of $\alpha$.
\end{lemma}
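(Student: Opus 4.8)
The plan is to mirror the proof of Lemma~\ref{lemma:1}, but with the roles of $v$ and $q$ interchanged: for a given $v \in V = H\Lambda^k$ I will exhibit an explicit test function $q_0 \in Q = H\Lambda^{k-1}$ that makes the quotient $b(v,q_0)/\Vert q_0\Vert_Q$ bounded below by a multiple of $|v|_V$. Since $b(v,q)=(v,d_{k-1}q)_\Omega$ and $|v|_V$ measures only the projection $\Pi v$ onto $d_{k-1}H\Lambda^{k-1}$, the natural candidate for $q_0$ is a co-closed potential of $\Pi v$.

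First I would apply the Hodge decomposition (\Cref{lemma:Hodge}) to $\Pi v \in H\Lambda^k$. Writing $\Pi v = d_{k-1} q_0 + d_k^* z$ with $d_{k-2}^* q_0 = 0$ and $d_{k+1}z = 0$, I would observe that $\Pi v$ lies in $d_{k-1}H\Lambda^{k-1}$ and is therefore $L^2$-orthogonal to the co-exact summand $d_k^* z$ (because $(d_{k-1}a, d_k^* z)_\Omega = (d_k d_{k-1}a, z)_\Omega = 0$). Hence the co-exact part drops out and $\Pi v = d_{k-1} q_0$ with $d_{k-2}^* q_0 = 0$. This $q_0$ is the desired test function; the condition $d_{k-2}^* q_0 = 0$ is precisely what will let me invoke Poincaré.

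Next I would evaluate the two ingredients. For the numerator, since $\Pi$ is the $L^2$-orthogonal projection onto $d_{k-1}H\Lambda^{k-1}$ and $\Pi v$ lies in its range,
\begin{align*}
b(v,q_0) = (v, d_{k-1}q_0)_\Omega = (v,\Pi v)_\Omega = \|\Pi v\|_\Omega^2 .
\end{align*}
For the denominator, using $d_{k-1}q_0 = \Pi v$ together with the Poincaré inequality (\Cref{lemma:Poincare}), which applies because $d_{k-2}^* q_0 = 0$ and gives $\|q_0\|_\Omega^2 \le c_{k-1}^P\|\Pi v\|_\Omega^2$, I obtain
\begin{align*}
\|q_0\|_Q^2 = \alpha\|q_0\|_\Omega^2 + (1+\alpha)\|d_{k-1}q_0\|_\Omega^2 \le \bigl(1 + \alpha(c_{k-1}^P+1)\bigr)\|\Pi v\|_\Omega^2 \le (c_{k-1}^P+1)(1+\alpha)\|\Pi v\|_\Omega^2 ,
\end{align*}
where the last step uses $1+\alpha(c_{k-1}^P+1) \le (c_{k-1}^P+1)(1+\alpha)$.

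Combining the two bounds gives $b(v,q_0)/\|q_0\|_Q \ge \|\Pi v\|_\Omega / \sqrt{(c_{k-1}^P+1)(1+\alpha)}$, and substituting $\|\Pi v\|_\Omega = (1+\alpha)^{1/2}|v|_V$ from \eqref{eqs:flipped_VQ_norms} cancels the $\alpha$-dependence and yields exactly $\underline{\beta}^f \ge (c_{k-1}^P+1)^{-1/2}$. I expect the only genuinely delicate step to be the first one: justifying that $\Pi v$ admits a \emph{co-closed} potential $q_0$, which is what makes the Poincaré constant on $H\Lambda^{k-1}$ available; this rests on exactness of the complex and on the orthogonality that eliminates the co-exact summand, and I would take care to argue that elimination cleanly rather than assume it.
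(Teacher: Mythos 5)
Your proof is correct and takes essentially the same route as the paper: the same test function $q_0$ (the co-closed potential of $\Pi v$), the same Poincar\'e-based bound $\|q_0\|_Q^2 \le (c_{k-1}^P+1)(1+\alpha)\|d_{k-1}q_0\|_\Omega^2$, and the same cancellation of the $(1+\alpha)$ factors against $|v|_V = (1+\alpha)^{-1/2}\|\Pi v\|_\Omega$. The only difference is presentational: you justify the existence of the co-closed potential of $\Pi v$ (by eliminating the co-exact summand via orthogonality) more explicitly than the paper, which simply asserts $\Pi v = d_{k-1}q_0$ with $q_0 \perp \Ker d_{k-1}$.
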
 
\begin{proof}
Let $v \in V$ be given. The $V$-norm only contains $\Pi v$, the projected component onto $d H \Lambda^{k - 1}$. From the Hodge decomposition (\Cref{lemma:Hodge}), we know that $\Pi v = d_{k - 1} q_0$ for some $q_0 \in H \Lambda^{k - 1}$ with $q_0 \perp \Ker d_{k-1}$.
We may therefore use the Poincaré inequality (\Cref{lemma:Poincare}) to obtain
\begin{align}
    \norm{q_0}_Q^2 & =  \alpha \norm{q_0}_\Omega^2 + (1 + \alpha) \norm{d_{k-1}q_0}_\Omega^2 \nonumber \\
                                & \leq \left( c_{k-1}^P \alpha + 1 + \alpha \right) \norm{d_{k-1}q_0}_\Omega^2 \nonumber \\
                                & \leq \left( c_{k-1}^P + 1\right) \left( 1 + \alpha\right) \norm{d_{k-1}q_0}_\Omega^2.
\end{align}

Next, we note that $(v, d_{k-1}q_0)_\Omega = (\Pi v, d_{k-1}q_0)_\Omega$ by the definition of the projection $\Pi$. Thus, choosing $q_0$ as the test function, we derive
\begin{align}
        \sup_{q \in Q} \frac{b(v, q)}{\norm{q}_Q} & \geq  \frac{b(v, q_0)}{\norm{q_0}_Q} = \frac{(\Pi v, d_{k-1} q_0)_\Omega}{\norm{q_0}_Q} \nonumber \\
        & \geq \left( c_{k-1}^P + 1\right)^{-\frac12} \left( 1 + \alpha\right)^{-\frac12} \frac{(\Pi v, d_{k-1}q_0)_\Omega}{ \norm{d_{k-1}q_0}_\Omega} \nonumber \\
        & = \left( c_{k-1}^P + 1\right)^{-\frac12} \left( 1 + \alpha\right)^{-\frac12} \norm{\Pi v}_\Omega = \left( c_{k-1}^P + 1\right)^{-\frac12} | v|_V.
\end{align}
\end{proof}

The following lemma shows that also the $V$-norm is equivalent to something more practical, leading to the preconditioner $\mathcal{P}$ given by \eqref{eq: preconditioner}.

\begin{lemma}\label{lemma:flipped_equivalence}
	The norm $ (1 + \alpha)^{-1} \norm{ v }_\Omega^2 + \norm{ dv }_\Omega^2$ is equivalent to the $V$-norm defined in~\eqref{eq:flipped_V_norm}, that is,
	\begin{align}\label{eq:flipped_equivalent_V_norms}
		\| v \|_V^2 \eqsim (1 + \alpha)^{-1} \| v \|_\Omega^2 + \| dv \|_\Omega^2, & \qquad \forall v \in V = H \Lambda^k.
	\end{align}
\end{lemma}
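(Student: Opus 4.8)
The plan is to recognize that the only difference between the two norms is the replacement of $\|v\|_\Omega^2$ by $\|\Pi v\|_\Omega^2$, and to control this difference by combining the Hodge decomposition with the Poincaré inequality. Concretely, I would write $v = d_{k-1} w + d_k^* z$ as in \Cref{lemma:Hodge}, with the two summands orthogonal in the $L^2$ inner product. Since $\Pi$ is the $L^2$ projection onto $d H\Lambda^{k-1}$, the projected component is precisely $\Pi v = d_{k-1} w$, so orthogonality yields $\|v\|_\Omega^2 = \|\Pi v\|_\Omega^2 + \|d_k^* z\|_\Omega^2$.

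The direction $\lesssim$ is immediate: because $\Pi$ is an orthogonal projection, $\|\Pi v\|_\Omega \le \|v\|_\Omega$, and hence $\|v\|_V^2 \le (1+\alpha)^{-1}\|v\|_\Omega^2 + \|dv\|_\Omega^2$ with constant $1$. For the converse $\gtrsim$ it therefore suffices to bound the remaining piece $\|d_k^* z\|_\Omega^2$ by $\|d_k v\|_\Omega^2$. Here I would observe that $d_{k-1}^*(d_k^* z) = 0$, since $d_{k-1}^* d_k^* = (d_k d_{k-1})^* = 0$, so the Poincaré inequality of \Cref{lemma:Poincare} applies to $d_k^* z \in H\Lambda^k$ and gives $\|d_k^* z\|_\Omega^2 \le c_k^P \|d_k d_k^* z\|_\Omega^2$. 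Since $d_k v = d_k d_{k-1} w + d_k d_k^* z = d_k d_k^* z$ by $d_k d_{k-1} = 0$, this reads $\|d_k^* z\|_\Omega^2 \le c_k^P \|d_k v\|_\Omega^2$.

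Combining these observations gives $\|v\|_\Omega^2 \le \|\Pi v\|_\Omega^2 + c_k^P \|dv\|_\Omega^2$; multiplying by $(1+\alpha)^{-1} \le 1$ and adding $\|dv\|_\Omega^2$ to both sides then yields the reverse inequality with constant $1 + c_k^P$. This argument essentially reuses the second-component estimate already carried out in the proof of \Cref{lemma:2}, so I do not expect a genuine obstacle. The only points that require care are the identification of $\Pi v$ with the exact component $d_{k-1} w$ of the Hodge decomposition, and the verification of the annihilation $d_{k-1}^* d_k^* = 0$, which is exactly what licenses the use of the Poincaré inequality on $d_k^* z$.
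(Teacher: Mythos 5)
Your proposal is correct and follows essentially the same route as the paper: Hodge decomposition $v = \Pi v + d_k^* z$, orthogonality for the $\lesssim$ direction, and the Poincar\'e inequality applied to the co-exact component $d_k^* z$ (using $d_k v = d_k d_k^* z$) for the $\gtrsim$ direction. Your bookkeeping of the constants is slightly cleaner than the paper's half-splitting of the $\|d_k v\|_\Omega^2$ term, and your explicit check that $d_{k-1}^* d_k^* = 0$ licenses the Poincar\'e inequality is a welcome detail the paper leaves implicit, but there is no substantive difference.
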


\begin{proof}
	The proof of both estimates uses the  Hodge decomposition $v = \Pi v + d_k^* z$.
	The $\lesssim$ direction trivially follows from the orthogonal decomposition since
	\begin{align}
		\norm{\Pi v}_\Omega^2 \leq \norm{\Pi v}_\Omega^2 + \norm{d_k^*z}_\Omega^2 = \norm{v}_\Omega^2 .
	\end{align}
	
	To prove the $\gtrsim$ direction we again use the Hodge decomposition to note that $d_kv = d_k d_k^* z$ and use the Poincaré inequality
	\begin{align}
		\norm{d_k v}_\Omega^2 &= \norm{d_k d_k^*z}_\Omega^2  \geq \left( c_k^P \right)^{-1} \norm{d_k^*z}_\Omega^2
    \end{align}
    
    However, we need to partially retain the norm on $d_k v$, so we apply this bound on half of the term:
    \begin{align}
		\norm{d_k v}_\Omega^2  & \geq  \left( 2c_k^P \right)^{-1} \norm{d_k^*z}_\Omega^2 + \frac12 \norm{d_k v}_\Omega^2, \nonumber \\
		   & \geq  \left( 2c_k^P \right)^{-1} \left( 1+\alpha \right)^{-1} \norm{d_k^*z}_\Omega^2 + \frac12 \norm{d_k v}_\Omega^2
    \end{align}
    From here, it straightforwardly follows that
    \begin{align}
    	 \norm{v}_V^2 & =  \left( 1+\alpha \right)^{-1} \norm{\Pi v}_\Omega^2 + \norm{d_k v}_\Omega^2 \nonumber \\
    	& \geq \left( 1+\alpha \right)^{-1} \norm{\Pi v}_\Omega^2 + \left( 2c_k^P \right)^{-1} \left( 1+\alpha \right)^{-1} \norm{d_k^*z}_\Omega^2 + \frac12 \norm{dv}_\Omega^2 \nonumber \\
    	& \geq \min\{1, \left( 2c_k^P \right)^{-1}\} \left( 1+\alpha \right)^{-1} \left( \norm{\Pi v}_\Omega^2 + \norm{d_k^*z}_\Omega^2  \right)+ \frac12 \norm{dv}_\Omega^2 \nonumber \\
    	& \geq \min\{1/2,\left(2c_k^P \right)^{-1} \}\left(   (1 + \alpha)^{-1} \norm{ v }_\Omega^2 + \norm{ d_k v }_\Omega^2 \right)
    \end{align}
    which completes the proof.
\end{proof}

In summary, the combination of \Cref{lemma:flipped_b_infsup,lemma:flipped_equivalence} and \Cref{thm:2} provide an alternative proof for \Cref{thm:3}.



\section{Numerical Results}
\label{sec:numerical_results}

Following the analysis of the previous section, we investigate the performance of the preconditioner~\eqref{eq: preconditioner}.
To maintain the estimates from the previous section, we need to consider discretization that preserves the structure that was used to derive them.
These conditions can be seen as a reformulation of the Galerkin property and the discrete $\inf\sup$ condition.
If we denote the discretizing space of $H\Lambda^k$ as $V_h^k$ then we need the spaces $V_h^k$ to satisfy the subcomplex property $d V_h^{k-1} \subset V_h^k$, $d V_h^{k} \subset V_h^{k+1}$ and the bounded projection property expressed by the diagram:
\begin{equation*} \label{eq:cochain_projection}
    \begin{tikzcd}
        H\Lambda^{k-1} \arrow[r, "d"] \arrow[d, "\pi_h^{k-1}"'] & H\Lambda^{k} \arrow[r, "d"] \arrow[d, "\pi_h^k"'] & H\Lambda^{k+1} \arrow[d, "\pi_h^{k+1}"'] \\
        V_h^{k-1} \arrow[r, "d"] & V_h^k \arrow[r, "d"] & V_h^{k+1}
    \end{tikzcd}
\end{equation*}
where the diagram commutes and the projection $\pi^k_h$ is bounded in the norm on $H\Lambda^k$ \cite{falk2014local}.
The discretizations that satisfy these properties and have sufficient aproximating properties are well known and studied, see \cite{arnold2006finite}. Importantly, the existence of such a projection ensures that the discrete sequence is itself an exact Hilbert complex and thus the results of the previous sections are directly inherited.

For the numerical experiments we use the Whitney forms that satisfy these conditions and justify the expectation of robustness with respect to $\alpha$ and mesh size. For $k = 0$, these correspond to the conventional, nodal Lagrange finite elements, whereas for $k = n$, they are the piecewise constants. The case $k = n - 1$ is the lowest-order facet-based Raviart-Thomas element \cite{raviart-thomas-0}. Finally, for $k = 1$ in 3D, the finite element is known as the edge-based Nédélec element of the first kind \cite{nedelec1-0}.

The experimental set-up is as follows.
We choose the domain to be $\Omega = [0, 1]^n$ with $n \in \{2, 3\}$, i.e. either the unit square or the unit cube. We then consider a sequence of five unstructured simplicial meshes $\{\Omega_h\}_h$ with decreasing $h$. We choose the parameter $\alpha \in \{ 10^{-4}, 10^{-2}, 1, 10^{2}, 10^{4} \}$ and set up the problem \eqref{eqs: Hodge Laplace problem} for $k \in \{ 1, \ldots, n \}$. The right-hand side is given by 
\begin{align}
	\langle g, q \rangle &= (\psi_{k - 1}, q)_\Omega, &
	\langle f, v \rangle &= (\psi_k, v)_\Omega
\end{align}
in which 
\begin{align}
	\psi_k &= 
	\begin{cases}
		\sum_{i = 1}^n \sin(2 \pi x_i), &k \in \{0, n\}, \\
		\sum_{i = 1}^n \sum_{j = 1}^n \sin(2 \pi x_i) \hat{\bm{e}}_j, & 1 \le k \le n - 1.
	\end{cases}
\end{align}

For each tuple $(h, \alpha, k)$, we apply the Minimal Residual (MinRes) method to the discretization of problem \eqref{eqs: Hodge Laplace problem} with the preconditioner \eqref{eq: preconditioner}. We then record the number of iterations required to reach an unpreconditioned relative residual of $10^{-7}$.

The numerical implementation uses the Python library PyGeoN \cite{pygeon} and the source code is freely available at \url{https://github.com/wmboon/preconditioning_HodgeLaplace}. All meshes are generated using GMSH \cite{geuzaine2009gmsh}. Moreover, the preconditioner $\mathcal{P}$ is implemented by employing direct solvers.

\subsection{Test case 1: the unit square}

In 2D, the Hodge Laplace problem with $k = 2$ corresponds to the mixed formulation of the Poisson equation, whereas the case $k = 1$ describes the vector Laplacian. 
We present the results for the two-dimensional test case in \Cref{tab: iterations 2D}. 

\begin{table}[ht]
\caption{Number of MinRes iterations for the Hodge Laplace problem \eqref{eqs: Hodge Laplace problem} with preconditioner \eqref{eq: preconditioner} on the unit square.}
\label{tab: iterations 2D}
\begin{tabular}{r|rrrrr|rrrrr}
\hline
& \multicolumn{5}{|c}{$k = 1$} & \multicolumn{5}{|c}{$k = 2$}\\
 $h \backslash \log_{10}(\alpha)$ & -4 & -2 & 0 & 2 & 4 & -4 & -2 & 0 & 2 & 4 \\
\hline
6.42e-02 & 4 & 4 & 5 & 5 & 4 & 2 & 2 & 4 & 5 & 4 \\
3.17e-02 & 4 & 4 & 5 & 5 & 4 & 2 & 2 & 4 & 5 & 4 \\
1.57e-02 & 4 & 4 & 4 & 5 & 4 & 2 & 2 & 4 & 5 & 4 \\
7.83e-03 & 4 & 4 & 4 & 4 & 4 & 2 & 2 & 4 & 5 & 4 \\
3.91e-03 & 4 & 4 & 4 & 4 & 4 & 2 & 2 & 4 & 5 & 5 
\end{tabular}
\end{table}

We clearly observe that the preconditioner is robust with respect to $\alpha$ and $h$, letting the Krylov subspace method converge within a maximum of five iterations. It is interesting to note that the preconditioner works particularly well for small values of $\alpha$ in the case of $k = 2$. This case corresponds to Darcy flow in highly permeable porous media. 

\subsection{Test case 2: the unit cube}

The three-dimensional setting gives rise to three distinct Hodge Laplace problems. The cases $k = 1$ and $k = 2$ correspond to different interpretations of the vector Laplacian, in which the dual variable is given by $p = \alpha^{-1} \nabla \cdot u$ or $p = \alpha^{-1} \nabla \times u$, respectively. We record our findings in \Cref{tab: iterations 3D}.

\begin{table}[ht]
\caption{Number of MinRes iterations for the Hodge Laplace problem \eqref{eqs: Hodge Laplace problem} with preconditioner \eqref{eq: preconditioner} on the unit cube.}
\label{tab: iterations 3D}
\begin{tabular}{r|rrrrr|rrrrr|rrrrr}
\hline
& \multicolumn{5}{|c}{$k = 1$} & \multicolumn{5}{|c}{$k = 2$} & \multicolumn{5}{|c}{$k = 3$}\\
 $h \backslash \log_{10}(\alpha)$ & -4 & -2 & 0 & 2 & 4 & -4 & -2 & 0 & 2 & 4 & -4 & -2 & 0 & 2 & 4 \\
\hline
3.79e-01 & 3 & 4 & 5 & 5 & 4 & 2 & 3 & 4 & 5 & 4 & 2 & 2 & 4 & 5 & 4 \\
2.62e-01 & 3 & 4 & 5 & 4 & 3 & 2 & 3 & 4 & 4 & 4 & 2 & 2 & 4 & 5 & 4 \\
1.99e-01 & 3 & 4 & 5 & 4 & 3 & 2 & 3 & 4 & 4 & 4 & 2 & 2 & 4 & 5 & 4 \\
1.37e-01 & 3 & 4 & 5 & 4 & 3 & 2 & 3 & 4 & 4 & 4 & 2 & 2 & 4 & 5 & 4 \\
9.06e-02 & 3 & 3 & 4 & 4 & 3 & 1 & 3 & 4 & 4 & 4 & 2 & 3 & 4 & 5 & 4 
\end{tabular}
\end{table}

Again, we observe that the preconditioner is robust with respect to $\alpha$ and $h$, for all three instances of $k$. Similar to the two-dimensional case, the preconditioner performs very well for the Darcy case $k = 3$ with small parameter $\alpha$, which corresponds physically to a high permeability. A similar behavior is observed for the case $k = 2$.

\section{Conclusion}
\label{sec:conclusion}

We have analyzed the Hodge Laplacian in mixed form in the abstract framework for perturbed saddle point problems recently proposed in~\cite{hong2023new}. This framework provides a norm fitting technique to construct parameter-dependent norms in which the problem is uniformly well-posed. In some instances, as demonstrated in this work for the Hodge-Laplacian, these norms can be further simplified for the purpose of making the induced norm-equivalent block diagonal preconditioners easier to implement.

We then analyzed the system from a different perspective and showed that this leads to the same weighted norms and preconditioner. The performance of this preconditioner was tested numerically on the Hodge Laplace problems on the discrete de Rham complex using low-order finite elements in 2D and 3D. The experiments showed that the preconditioner is robust with respect to the mesh size $h$ and the weighting parameter $\alpha$. 

By presenting the analysis in a general setting, our results are directly applicable to any Hodge Laplace problem on an exact Hilbert complex. The proposed preconditioning strategy is therefore also robust for higher-order finite elements in the Finite Element Exterior Calculus \cite{arnold2006finite} framework, discrete de Rham methods \cite{di2023arbitrary}, and Virtual Element Methods \cite{beirao2013basic}. Moreover, the approach is not limited to the problems on the de Rham complex and applies, among others, to the Hodge Laplace problems on the elasticity, Stokes, and Cosserat complexes.

{\small
\printbibliography
}
	
{\small
{\em Authors' addresses}:
{\em Wietse M. Boon}, Division of Energy and Technology, NORCE Norwegian Research Centre, Bergen, Norway,
 e-mail: \texttt{wibo@\allowbreak norceresearch.no};
{\em Johannes Kraus, Maria Lymbery}, Faculty of Mathematics, University of Duisburg-Essen, Thea-Leymann-Str.~9,
D-45127 Essen, Germany, e-mail: \texttt{johannes.kraus@\allowbreak uni-due.de, maria.lymbery@\allowbreak uni-due.de};
{\em Tomáš Luber}, Institute of Geonics, Czech Academy of Sciences, Studenstká 1768, 70800 Ostrava, Czech Republic,
e-mail: \texttt{ tomas.luber@\allowbreak ugn.cas.cz}.
}

\end{document}